\theoremstyle{plain} 
\newtheorem{teo}{Theorem}[section] 
\newtheorem{lem}[teo]{Lemma}
\newtheorem{prop}[teo]{Proposition}
\newtheorem{defn}[teo]{Definition}
\newtheorem{oss}[teo]{Remark}
\numberwithin{equation}{section} 
\newcommand{\R}{\ensuremath{\mathbb{R}}}
\newcommand{\N}{\ensuremath{\mathbb{N}}}
\newcommand{\La}{\ensuremath{\mathcal{L}}}
\newcommand{\D}{\ensuremath{\bold{D}}}
\newcommand{\al}{\ensuremath{ &\;}}
\newcommand{\C}{\ensuremath{\mathbb{C}}}
\newcommand{\lr}[1]{\left( #1 \right)}
\newcommand{\lrq}[1]{\left[ #1 \right]}
  \newcommand{\pd}[2][]{\frac{\partial #2}{\partial #1}}
  \newcommand{\dpd}[2][]{\frac{\partial^2 #2}{\partial #1^2}}
\newcommand{\eqlab}[1]{\begin{equation}  \begin{aligned}#1 \end{aligned}\end{equation}}
\newcommand{\bgs}[1]{\begin{equation*} \begin{aligned}#1\end{aligned}\end{equation*}}
 \newcommand{\syslab}[2] []  {\begin{equation}#1  \left\{\begin{aligned}#2\end{aligned}\right.\end{equation}}
  \newcommand{\sys}[2][]{\begin{equation*}#1  \left\{\begin{aligned}#2\end{aligned}\right.\end{equation*}}
 \renewcommand{\phi}{\ensuremath{\varphi}}
\begin{document}
\nocite*
\date{}
\author{Claudia Bucur}
\author{Fausto Ferrari}
\address{Claudia Bucur: Dipartimento di Matematica\\ Universit\`a degli Studi di Milano \\ Via Cesare Saldini, 50 \\ 20100, Milano-Italy}
\address{Fausto Ferrari: Dipartimento di Matematica\\ Universit\`a di Bologna\\ Piazza di Porta S.Donato 5\\ 40126, Bologna-Italy}
\keywords{Fractional derivative, Marchaud derivative, extension operator, Harnack inequality. MSC: 26A33, 35K10, 35K65}
\email{claudia.bucur@unimi.it}
\email{fausto.ferrari@unibo.it }
\title[An extension problem for the fractional derivative defined by Marchaud]{An extension problem for the  fractional derivative defined by Marchaud}
\begin{abstract}
We prove that the (nonlocal) Marchaud fractional derivative in $\R$ can be obtained from a parabolic extension problem with an extra (positive) variable, as the operator that maps the heat conduction equation to the Neumann condition.
Some properties of the fractional derivative are deduced from those of the local operator. In particular we prove a Harnack principle for Marchaud-stationary functions. 
\end{abstract}

\maketitle

\tableofcontents

\section{Introduction}

In literature there are several definitions of fractional derivatives (see, for instance, the monographs  \cite{samkokilbas}, \cite{Samko} and \cite{Butzer_Westphal} for an historical introduction). In particular, we are interested in the notion given by Marchaud, see \cite{Marchaud}, who introduced two types of fractional derivatives. For a fixed $s\in (0,1)$, the left and the right Marchaud fractional derivative of order $s $, see \cite{samkokilbas}, formulas 5.57 and 5.58, are respectively defined as follows:
\eqlab{ \label{mdefct}
{\D}^s_{\pm} f(t)=\frac{s}{\Gamma(1-s)}\int_{0}^{\infty}\frac{f(t)-f(t\mp \tau)}{\tau^{1+s}}d\tau.
}
 These fractional derivatives are well defined when $f$ is a bounded, locally H{ö}lder continuous function in $\R.$  In particular, we may assume that  $f\in C^{\bar{\gamma}}(\R),$ for $s<\bar{\gamma} \leq 1$ and $f\in L^{\infty}(\mathbb{R})$ (see the Appendix for further details), even though these hypotheses can be weakened. 
    In addition, we just recall here that  the Marchaud derivative can be defined for $s\in (0,n)$ and $n \in \N$, as
\[\D_{\pm}^s f(t) = \frac{\{s\}}{\Gamma(1-\{s\})} \int_{0}^{\infty} \frac{f^{[s]} (t)-f^{[s]}(t\mp\tau)}{\tau^{1+\{s\}}}\, d \tau,\]
where $[s]$ and $\{s\}$ denote, respectively, the integer and the fractional part of $s$. Our work focuses on the case $n=1$ and, in the first part of the paper, on the left fractional derivative, that we can write by a change of variable, neglecting the constant and omitting for simplicity the subscript symbol $+$, as:
	\begin{equation}\label{frader}
	\begin{split} \D^s f (t):= \al \int_{0}^\infty \frac{f(t)-f(t-\tau)}{\tau^{s+1}}\, d\tau= \int_{-\infty}^t \frac{f(t)-f(\tau)}{(t-\tau)^{s+1}}\, d\tau.
	 \end{split}
	 \end{equation}

A short remark on  the right counterpart of the Marchaud fractional derivative is given in Section \ref{rightderiv}. Moreover, we point out that we argue considering (\ref{frader}) as the definition of our fractional derivative without taking care of what happens when $s\to 0^+$ or $s\to 1^-$. Nevertheless, in the Appendix, we briefly discuss these interesting  cases with respect to the definition given in (\ref{mdefct}).

 The purpose of the present work is to introduce an extension operator for the fractional derivative introduced in \eqref{frader}. Indeed, the operator  $\D^s$ naturally arises when dealing with a weighted parabolic differential equation (the heat conduction problem) on the positive half-plane, with a positive space variable and for all times, namely for $(x,t)\in [0,\infty)\times \R $. 
 
 In order to construct this extension operator, we exploit the idea recently revisited in \cite{CAFSIL}. In that paper, the fractional Laplacian was characterized via an extension procedure, by means of a weighted second order elliptic local operator. 
 
Considering the function $\phi$ of one variable, formally representing the time variable, our approach relies on constructing a weighted parabolic local operator by adding an extra variable, say the space variable, on the positive half-line, and working on the extended plane $ [0,\infty)\times \R $.
 
 The heuristic argument can be described as follows. Let $\varphi:\mathbb{R}\to \mathbb{R}$ be a given function, sufficiently smooth. Let $U$ be a solution of the problem
 \syslab{ \label{pro1} & \pd[t]{U}=\dpd[x]{U},  &&(x,t) \in (0,\infty)\times\mathbb{R}\\
 		&U(0,t)=\varphi(t), &&t \in \R. }
 		
Let us point out that this is not the usual Cauchy problem for the heat operator, but a heat conduction problem. 

It is known that, without extra assumptions, we can not expect to have a unique solution of the problem (\ref{pro1}), see \cite{tichonov}. Nevertheless, if we denote by $T_{1/2}$ the operator that associates to $\varphi$ the partial derivative $ \displaystyle\pd[x]{ U},$ whenever $U$ is sufficiently regular, we have that
$$
T_{1/2}T_{1/2}\varphi=\frac{d\varphi}{dt}.
$$
That is $T_{1/2}$ acts like an half derivative, indeed 
$$
\frac{\partial}{\partial x}\frac{\partial U}{\partial x}(x,t)=\frac{\partial U}{\partial t}(x,t) \underset{x \rightarrow 0} \longrightarrow  \frac{d\varphi(t)}{dt}.
$$
The solution of problem (\ref{pro1}) under the reasonable assumptions that $\varphi$ is bounded and H{ö}lder continuous, is explicitly known (check e.g. \cite{tichonov}) to be
\bgs{ U(x,t)=\al c {x} \int_{-\infty}^t \displaystyle e^{-\frac{x^2}{4(t-\tau)}}{(t-\tau)^{-\frac{3}{2}}}\varphi(\tau)\, d\tau\\
=\al c {x}  \int_{0}^{\infty} \displaystyle e^{-\frac{x^2}{4\tau}}{\tau^{-\frac{3}{2}}}\varphi (t-\tau)\, d\tau,
} 
where the last line is obtained with a change of variable. Using $\displaystyle t =\frac{x^2}{4\tau}$ and the integral definition of the Gamma function (see formula 6.1.1 in \cite{ABRAMOWITZ}) we have that 
\[ \int_0^\infty x e^{-\frac{x^2}{4\tau}} \tau ^{-\frac{3}2} \, d\tau = 2 \int_0^\infty e^{-t} t^{-\frac{1}2}\, dt = 2 \Gamma\lr{\frac{1}2}.\]
Hence,
\bgs{ \frac{U(x,t)-U(0,t)}{x}=\al c\int_{0}^{\infty}e^{-\frac{x^2}{4\tau}}{\tau^{-\frac{3}{2}}}\left(\phi(t-\tau)-\phi(t)\right)d\tau,}
choosing $c$ that takes into account the right normalization. This yields, by passing to the limit, that
\begin{equation}
-\lim_{x\to 0^+}\frac{U(x,t)-U(0,t)}{x}=c\int_{0}^{\infty}\frac{\phi(t)-\phi(t-\tau)}{\tau^{\frac{3}{2}}}d\tau.
\end{equation}
Hence, with the right choice of the constant, we get exactly $\D^{1/2}\phi$ i.e. the Marchaud derivative  of order $1/2$ of $\phi.$

Now we are in position to state our main result.
 
\begin{teo} \label{teo:mainstat}
Let $s\in (0,1) $ and $\varphi \colon \R \to \R$ be a bounded, locally $C^{\bar{\gamma}}$ function for $s<\bar{\gamma}\leq1$. Let $U\colon [0,\infty)\times \R\to \R$ be 
a solution of the problem 
\syslab{ \label{prob1}
	& \pd[t]{ U (x,t)} = \frac{1-2s}x \pd[x]{ U (x,t)}+ \dpd[x]{ U(x,t)},  & &   (x,t)\in(0,\infty)\times \R\\
	 &U(0,t)=\varphi(t),  &&t\in\R\\
	  & \lim_{x \to \infty} U(x,t)=0. && 
}
Then $U$ defines the extension operator for $\phi$, such that
\eqlab{\label{mainstat} \D^s \varphi(t)=-\lim_{x\to 0^+} c_s x^{-2s}(U(x,t)- \varphi(t)) ,}
where \[ c_s= 4^s\Gamma(s).\] \end{teo}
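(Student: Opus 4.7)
The plan is to exhibit an explicit Poisson-type formula for $U$ and then read off the boundary behaviour from it. Motivated by the $s=1/2$ heuristic from the introduction and by the fact that $\partial_{xx}+\frac{1-2s}{x}\partial_{x}$ is a radial Laplacian in ``dimension'' $2-2s$, I would take as candidate
\begin{equation*}
U(x,t)=\frac{1}{4^{s}\Gamma(s)}\int_{-\infty}^{t}\frac{x^{2s}\,e^{-x^{2}/(4(t-\tau))}}{(t-\tau)^{1+s}}\,\varphi(\tau)\,d\tau.
\end{equation*}
The change of variables $u=x^{2}/(4(t-\tau))$ turns $\int_{-\infty}^{t}x^{2s}(t-\tau)^{-1-s}e^{-x^{2}/(4(t-\tau))}d\tau$ into $4^{s}\int_{0}^{\infty}u^{s-1}e^{-u}du=4^{s}\Gamma(s)=c_{s}$, independent of $x$. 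This both pins down the normalizing constant and, by dominated convergence together with the boundedness of $\varphi$, shows that $U(x,t)\to\varphi(t)$ as $x\to 0^{+}$; the decay as $x\to\infty$ is immediate from the Gaussian factor.

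Second, I would verify that this $U$ solves the PDE. Setting $K(x,\sigma)=x^{2s}\sigma^{-1-s}e^{-x^{2}/(4\sigma)}$ and $H(x,\sigma)=\sigma^{-1-s}e^{-x^{2}/(4\sigma)}$, a direct differentiation shows that $H$, being essentially the radial heat kernel in dimension $2+2s$, satisfies $\partial_{\sigma}H=\partial_{xx}H+\frac{1+2s}{x}\partial_{x}H$. Multiplication by $x^{2s}$ conjugates this equation into $\partial_{\sigma}K=\partial_{xx}K+\frac{1-2s}{x}\partial_{x}K$: the cross-terms proportional to $x^{2s-2}H$ cancel, and the first-order coefficient flips sign from $1+2s$ to $1-2s$. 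Since $K(x,\sigma)$ vanishes as $\sigma\to 0^{+}$ for fixed $x>0$, differentiating under the integral $U(x,t)=\frac{1}{c_{s}}\int_{-\infty}^{t}K(x,t-\tau)\varphi(\tau)d\tau$ produces no boundary term and gives the equation for $U$.

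For the identity (\ref{mainstat}), I would use that the kernel has total mass $c_{s}$ in $\tau$ to write, after cancelling $x^{2s}$,
\begin{equation*}
-c_{s}x^{-2s}\bigl(U(x,t)-\varphi(t)\bigr)=\int_{-\infty}^{t}\frac{e^{-x^{2}/(4(t-\tau))}}{(t-\tau)^{1+s}}\bigl(\varphi(t)-\varphi(\tau)\bigr)d\tau.
\end{equation*}
The integrand is dominated uniformly in $x>0$ by $|\varphi(t)-\varphi(\tau)|/(t-\tau)^{1+s}$, which is integrable on $(-\infty,t)$ by the local $C^{\bar\gamma}$ hypothesis with $\bar\gamma>s$ near $\tau=t$, and by the $L^{\infty}$ bound on $\varphi$ together with $1+s>1$ at $-\infty$. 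Dominated convergence as $x\to 0^{+}$ yields the representation (\ref{frader}) of $\D^{s}\varphi(t)$. The main technical point to watch is the algebraic simplification in the PDE verification: the two corrections coming from $K=x^{2s}H$ must exactly turn $1+2s$ into $1-2s$ and kill the pure $H$-term, a short but sign-sensitive computation; everything else reduces to dominated convergence backed by the Hölder regularity of $\varphi$.
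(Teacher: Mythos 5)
Your proposal is correct, and on the step that constitutes the paper's actual proof of Theorem \ref{teo:mainstat} it coincides with the paper essentially line for line: both use the unit $t$-mass of the kernel to replace $U(x,t)-\varphi(t)$ by $\frac{1}{4^s\Gamma(s)}\int_0^\infty e^{-x^2/(4\tau)}\tau^{-s-1}(\varphi(t-\tau)-\varphi(t))\,d\tau$, then dominate the integrand by $c\,\tau^{\bar\gamma-s-1}$ near $\tau=0$ (using local H\"older regularity with $\bar\gamma>s$) and by $2\|\varphi\|_{L^\infty}\tau^{-s-1}$ at infinity, and conclude by dominated convergence, recovering \eqref{frader}. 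Where you genuinely diverge is in how you obtain and verify the representation formula, which in the paper is a separate result (Theorem \ref{teo:sol}): there the candidate is produced via the Laplace transform in $t$, reducing to the ODE problem \eqref{probf1}, whose solution is expressed through the modified Bessel function $\mathbf{K}_s$ (Proposition \ref{prop:FerFra} and formula \eqref{calcKpsi}), and the PDE is then checked by brute-force differentiation of $V=c_sU$. Your alternative --- observing that $H(x,\sigma)=\sigma^{-1-s}e^{-x^2/(4\sigma)}$ is, up to a constant, the radial heat kernel in dimension $2+2s$, so that $K=x^{2s}H$ conjugates $\partial_{xx}+\frac{1+2s}{x}\partial_x$ into $\partial_{xx}+\frac{1-2s}{x}\partial_x$ --- is correct (the $x^{2s-2}H$ terms carry total coefficient $2s(2s-1)+2s(1-2s)=0$, and the $\partial_xH$ coefficient becomes $4s+(1-2s)=1+2s$, exactly as you assert, with no boundary term since $K(x,\sigma)\to0$ as $\sigma\to0^+$ for fixed $x>0$), and it is shorter and more self-contained than the paper's route; what the Laplace-transform derivation buys instead is an explanation of where the kernel comes from rather than a verification of a guessed formula. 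One caveat, which your sketch shares with the paper's own proof of Theorem \ref{teo:sol}: the decay $\lim_{x\to\infty}U(x,t)=0$ is \emph{not} immediate from the Gaussian factor for merely bounded $\varphi$, since the kernel has unit $t$-mass for every $x$ (for instance $\varphi\equiv1$ yields $U\equiv1$); this is immaterial to the trace identity \eqref{mainstat}, which uses only the explicit formula \eqref{solsts2}, but you should not claim it in that generality.
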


We notice that one can write
	\eqlab{ \label{mainstat1} \D^s \varphi(t)= -\lim_{x\to 0^+} c_sx^{1-2s}\pd[x]{ U}(x,t),}
	in analogy with formula (3.1) in \cite{CAFSIL}.



\begin{oss}
The extension defined in \eqref{mainstat1} satisfies, as one would expect, up to constants:
 	\[ \D^{1-s} \D^{s}  \varphi(t)= \varphi'(t).\]
Indeed, 
\begin{equation}
\begin{split}
\D^{1-s} \D^s \varphi(t) =&  \lim_{x\to 0^+} x^{2s-1} \pd[x]{} \left(x^{1-2s} \pd[x]{ U}(x,t)\right)\\=  & \lim_{x\to 0^+} \dpd[x]{U}(x,t) + \frac{1-2s}x \pd[x]{ U}(x,t)\\
=&\lim_{x\to 0^+} \pd[t]{ U}(x,t)=\pd[t]{ U}(0,t)=\varphi'(t) .
\end{split}
\end{equation}
	\end{oss}
	
An interesting application that follows from this extension procedure is a Harnack inequality for Marchaud-stationary functions in an interval $J \subseteq {\mathbb{R}},$ namely for functions that satisfy  $\D^s \phi=0$  in  $J.$ This result is not trivial, since the fractional stationary functions on an interval of $\mathbb{R}$ determine a nontrivial set of functions, see e.g. \cite{Bucur}.  
\begin{teo}\label{teoHarn}
Let $s\in (0,1)$. There exists a positive constant $\gamma$ such that, if $\D^s\phi=0$ in $J\subseteq \R$ and $\varphi\geq 0$ in $\mathbb{R}$, then  
\begin{equation}\label{Harnack_intro}
\sup_{[t_0-\frac{3}{4}\delta,t_0-\frac{1}{4}\delta]}\phi\leq \gamma \inf_{[t_0+\frac{3}{4}\delta,t_0+\delta]}\phi
\end{equation}
for every $t_0\in \R$ and for every  $\delta >0$ such that $[t_0-\delta,t_0+\delta]\subset J$.
\end{teo}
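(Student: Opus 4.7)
My plan is to transfer the Harnack inequality from a weighted parabolic equation in the extended half-plane down to the Marchaud-stationary function $\varphi$ on the line, using Theorem \ref{teo:mainstat} as the bridge.

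First I would let $U\colon[0,\infty)\times\R\to\R$ be the extension of $\varphi$ provided by Theorem \ref{teo:mainstat}, so $U$ solves the degenerate parabolic equation
\[
\partial_t U = \frac{1-2s}{x}\partial_x U + \partial_x^2 U \qquad \text{in } (0,\infty)\times\R,
\]
with trace $U(0,t)=\varphi(t)$. Equivalently, in divergence form, $x^{1-2s}\partial_t U = \partial_x(x^{1-2s}\partial_x U)$. Two preliminary facts are then needed: (i) since $\varphi\ge 0$, one verifies $U\ge 0$ on the whole half-plane (either through a maximum principle for the weighted operator, or directly from the Poisson-type kernel representation analogous to the one sketched in the introduction for $s=1/2$); (ii) the hypothesis $\D^s\varphi=0$ on $J$ translates, by formula \eqref{mainstat1}, into the weighted Neumann condition
\[
\lim_{x\to 0^+} x^{1-2s}\partial_x U(x,t)=0 \qquad \text{for every } t\in J.
\]

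Next, I would perform an even reflection across $\{x=0\}$, setting $\widetilde U(x,t):=U(|x|,t)$. Because of the vanishing Neumann condition on $J$, the reflected function $\widetilde U$ is a nonnegative weak solution of the degenerate parabolic equation
\[
|x|^{1-2s}\partial_t \widetilde U = \partial_x\bigl(|x|^{1-2s}\partial_x \widetilde U\bigr)
\]
on an open neighborhood of $\{0\}\times J$ in $\R\times\R$. The crucial observation is that the weight $|x|^{1-2s}$ belongs to the Muckenhoupt class $A_2(\R)$ since $-1<1-2s<1$, so this is precisely the setting of the weighted parabolic Harnack inequality developed for $A_2$-degenerate parabolic operators (Chiarenza--Serapioni, Gutiérrez--Wheeden, Ishige and related work).

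I would then apply that Harnack inequality on an appropriately shrunk space-time cylinder centered at $(0,t_0)$ and contained in the reflected domain: choosing the spatial radius of order $\delta$ and the two time-slabs $[t_0-\tfrac{3}{4}\delta, t_0-\tfrac{1}{4}\delta]$ (past) and $[t_0+\tfrac{3}{4}\delta, t_0+\delta]$ (future), one obtains a constant $\gamma=\gamma(s)>0$ independent of $t_0$ and $\delta$ (by the natural parabolic scaling $x\mapsto \lambda x$, $t\mapsto \lambda^2 t$ which preserves the equation) such that the sup over the past part is bounded by $\gamma$ times the inf over the future part. Restricting to the slice $x=0$ and using $\widetilde U(0,t)=\varphi(t)$ gives exactly \eqref{Harnack_intro}.

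The main obstacle I expect is technical rather than conceptual: to invoke a weighted parabolic Harnack inequality one needs $\widetilde U$ to be a genuine weak solution across $\{x=0\}\cap(J\times\R)$, which requires checking that the vanishing weighted Neumann trace suffices for the reflection to satisfy the equation in the distributional sense against $|x|^{1-2s}$-weighted test functions, and that $\widetilde U$ has the right Sobolev regularity for the Harnack theorem to apply. The remaining work is to match the geometry of the cylinders produced by the abstract Harnack theorem with the explicit intervals in \eqref{Harnack_intro}, which is a matter of rescaling and covering.
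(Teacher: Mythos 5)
Your proposal follows essentially the same route as the paper: extend $\varphi$ via Theorem \ref{teo:mainstat}, note $U\geq 0$ from the explicit kernel representation \eqref{solsts1}, translate $\D^s\varphi=0$ into the vanishing weighted Neumann trace, reflect evenly across $x=0$ (this is exactly the paper's Lemma \ref{tildeu}, which carries out the distributional verification you flag as the main technical obstacle), and apply the Chiarenza--Serapioni Harnack inequality for the $A_2$ weight $|x|^{1-2s}$ before slicing at $x=0$. The only cosmetic difference is that the paper handles arbitrary intervals $J$ by a time translation $U_\theta(x,t)=U(x,t+\theta)$ rather than by your scaling-and-covering remark, and matches your $\delta$ to the parabolic $\rho^2$ directly; both are routine.
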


This result can be deduced from the Harnack inequality proved in \cite{CS} for some weighted parabolic operators. In particular, the constant $\gamma$ used in the previous Theorem \ref{teoHarn} is the same that appears in the parabolic 
Harnack case, see \cite{CS}. 
In addition, we remark that the inequality (\ref{Harnack_intro}) is not the usual Harnack inequality for elliptic operators, where the comparison between the supremum and the infimum is done on the same set, e.g. the same metric ball.
This Harnack inequality for the Marchaud-stationary functions inherits the behavior of its parabolic extension. 

\section{The weighted parabolic problem} 

In this section we find a solution of the system in \eqref{prob1}. At first, we introduce a particular kernel, that acts as the Poisson kernel. We then look for a particular solution of the system by means of the Laplace transform, and in this way we show how the solution arises. Finally, by a straightforward check, it yields that indeed the indicated solution satisfies the problem \eqref{prob1}.

 \subsection{Properties of the kernel $\Psi_s$}
In this section we introduce and study the properties of a kernel, that acts as the Poisson kernel for the problem \eqref{prob1}.

We define  for every $x\in \mathbb{R},$
\syslab[\Psi_s(x,t) :=] { \label{kern} & \frac{1}{4^s \Gamma(s)} x^{2s} e^{-\frac{x^2}{4t}} t^{-s-1},& \mbox{ if }& t> 0 ,\\
								&0, & \mbox{ if }& t\leq 0 .} 
	Also, let
	\syslab[\psi_s(t) :=] {  \label{kern1} & \frac{1}{4^s \Gamma(s)} e^{-\frac{1}{4t}} t^{-s-1},& \mbox{ if }& t> 0 ,\\
								&0, & \mbox{ if }& t\leq 0 } 
		and notice that
	\begin{equation} \label {bla4} \int_\R \Psi_s(x,t)\, dt= \int_\R \psi_s(t)\, dt.
	\end{equation}
	Indeed, we have by changing coordinates $\tau=\displaystyle \frac{t}{x^2}$ 
	\begin{equation*}
	\begin{split}  \int_{\R} \Psi_s(x,t)\, dt  =\al \frac{1}{4^s\Gamma(s)} \int_0^\infty x^{2s}e^{-\frac{x^2}{4t}} t^{-s-1} \, dt\\ 
	=\al\frac{1}{4^s\Gamma(s)} \int_0^\infty e^{-\frac{1}{4\tau}} \tau^{-s-1} \, d\tau\\
	= \al\int_\R \psi_s(t)\, dt.
	\end{split}
	\end{equation*}
	
The kernel $\Psi_s$ satisfies also the following property:

\eqlab{\label{kencalc1} \int_{\R} \Psi_s(x,t)\, dt = 1.}
Indeed  by recalling \eqref{bla4} and performing the change of variables $ t = \displaystyle \frac{1}{4\tau},$ we get
	\begin{equation} \label{bla2}
	\begin{split}
	\int_\R \psi_s(\tau)\, d\tau = \frac{1}{4^s\Gamma(s)}   \int_0^\infty e^{-\frac{1}{4\tau}} \tau^{-s-1} \, d\tau
	 =  \frac{1}{\Gamma(s)} \int_0^\infty e^{-\tau} \tau^{s-1}\, d \tau .
	 \end{split}
	 \end{equation}
	 From the integral definition of the Gamma function (see formula 6.1.1 in \cite{ABRAMOWITZ})
	 \[ \Gamma(s) =\int_0^\infty e^{-t} t^{s-1}\, d t,\]
	 it follows that
				  \bgs{  \int_{\R} \Psi_s(x,t)\, dt  =1.  }  		

Taking the Laplace transform
 of the kernel $\Psi_s$, we have the following result involving the modified Bessel function of the second kind $\bold K_s,$ see \cite{MaGoberSo} and
 \cite{ABRAMOWITZ}, \textsection 9.6.
 \begin{lem}The Laplace transform of the function $\psi_s \in L^1(\R)$ is
\eqlab{ \label{fourfpsi} \La (\psi_s)(\omega)= \frac{1}{2^{s-1}\Gamma(s)} \omega^{\frac{s}2}\bold K_s(\sqrt{\omega}) \mbox{ for } \Re \omega >0.}
Moreover, the Laplace transform with respect to the variable $t$ of the kernel $\Psi_s \in L^1(\R, dt)$ is
\eqlab{ \label{calcKpsi} \La (\Psi_s)(x,\omega)=\frac{1}{2^{s-1}\Gamma(s)} x^s \omega^{\frac{s}2}\bold K_s(x\sqrt{\omega}) \mbox{ for } \Re \omega >0. }
\end{lem}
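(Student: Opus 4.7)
The plan is to compute $\mathcal{L}(\psi_s)$ directly from its definition, recognize the resulting integral as the standard integral representation of the modified Bessel function $\mathbf{K}_s$, and then deduce the formula for $\mathcal{L}(\Psi_s)$ by a scaling argument.

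First, I would check that $\psi_s \in L^1(\R)$. The integrand $e^{-1/(4t)} t^{-s-1}$ decays faster than any polynomial as $t \to 0^+$ (because of the essential singularity in the exponential) and behaves like $t^{-s-1}$ as $t\to \infty$, which is integrable since $s>0$. Then, by definition,
\begin{equation*}
\mathcal{L}(\psi_s)(\omega) = \frac{1}{4^s \Gamma(s)} \int_0^\infty e^{-\omega t} e^{-\frac{1}{4t}} t^{-s-1}\, dt, \qquad \Re \omega > 0.
\end{equation*}
The key step is to invoke the classical integral representation (see e.g.\ Abramowitz--Stegun 9.6.24)
\begin{equation*}
\int_0^\infty t^{\nu-1} e^{-a t - b/t}\, dt = 2 \left(\frac{b}{a}\right)^{\nu/2} \mathbf{K}_\nu\bigl(2\sqrt{ab}\bigr), \qquad \Re a, \Re b > 0,
\end{equation*}
applied with $\nu = -s$, $a = \omega$, $b = 1/4$. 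This yields
\begin{equation*}
\int_0^\infty t^{-s-1} e^{-\omega t - \frac{1}{4t}}\, dt = 2 (4\omega)^{s/2}\, \mathbf{K}_{-s}(\sqrt{\omega}) = 2^{s+1}\omega^{s/2}\, \mathbf{K}_s(\sqrt{\omega}),
\end{equation*}
where I have used the symmetry $\mathbf{K}_{-s} = \mathbf{K}_s$. Dividing by $4^s \Gamma(s)$ and simplifying the powers of $2$ gives \eqref{fourfpsi}.

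For $\Psi_s$, rather than redoing the computation, I would observe the self-similar structure of the kernel: a direct substitution shows
\begin{equation*}
\Psi_s(x,t) = x^{-2}\, \psi_s\!\left(\frac{t}{x^2}\right), \qquad x>0,\ t>0.
\end{equation*}
Hence the change of variable $\tau = t/x^2$ in the Laplace transform gives
\begin{equation*}
\mathcal{L}(\Psi_s)(x,\omega) = \int_0^\infty e^{-\omega t}\, x^{-2} \psi_s(t/x^2)\, dt = \int_0^\infty e^{-x^2 \omega \tau} \psi_s(\tau)\, d\tau = \mathcal{L}(\psi_s)(x^2\omega),
\end{equation*}
and plugging this into \eqref{fourfpsi} immediately yields \eqref{calcKpsi}.

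The only real obstacle is bookkeeping: matching the exponents and constants in the Bessel integral representation with the $4^s \Gamma(s)$ normalization of $\psi_s$, and confirming that the analytic continuation in $\omega$ works for all $\Re \omega > 0$ (which follows because the integrand is dominated by $e^{-\Re \omega \cdot t}\psi_s(t) \in L^1$).
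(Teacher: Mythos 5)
Your proposal is correct and follows essentially the same route as the paper: both reduce $\mathcal{L}(\Psi_s)(x,\omega)$ to $\mathcal{L}(\psi_s)(x^2\omega)$ via the scaling $t=x^2\tau$ (your self-similarity identity $\Psi_s(x,t)=x^{-2}\psi_s(t/x^2)$ is exactly this substitution), and both evaluate the remaining integral by the same tabulated formula $\int_0^\infty t^{\nu-1}e^{-at-b/t}\,dt=2(b/a)^{\nu/2}\bold{K}_\nu(2\sqrt{ab})$ with $\nu=-s$ together with the symmetry $\bold{K}_{-s}=\bold{K}_s$, the paper citing it as the Laplace transform formula 5.34 in Oberhettinger--Badii rather than Abramowitz--Stegun (your reference 9.6.24 is the $\cosh$ representation, so the pointer should be corrected, e.g.\ to Gradshteyn--Ryzhik 3.471.9, though this does not affect the argument). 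All constants check out ($2^{s+1}/4^s=2^{1-s}=1/2^{s-1}$), so the proof is complete.
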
	
\begin{proof}
If one proves claim \eqref{fourfpsi}, the second result \eqref{calcKpsi} follows after a change of variables. We have that
 \bgs{ \La (\Psi_s)(x,\omega) = \frac{1}{4^s\Gamma(s)}\int_0^\infty x^{2s} e^{-\frac{x^2}{4t} } t^{-s-1} e^{-\omega t} \, dt.} Taking $t=x^2\tau$ (and recalling that $x>0$), we obtain
 \bgs{ \La  (\Psi_s)(x,\omega) = \al \frac {1}{4^s\Gamma(s)}\int_0^\infty  e^{-\frac{1}{4\tau} } \tau^{-s-1} e^{-\omega (x^2 \tau)} \, d\tau\\
 =\al \La  (\psi_s)(x^2 \omega).} 
For $\Re a>0$ and  $\omega\in \C$ with $\Re\omega >0$, as stated in formula 5.34 in \cite{oberl}, we have that
 	\[\La \lr{t^{\gamma-1} e^{-\frac{a}{t} }} = 2 \lr{\frac{a}{\omega}}^{\frac{\gamma}2} \bold K_\gamma \lr{ 2(a\omega)^{\frac{1}2}}  .\] 
 Taking $\gamma=-s$, recalling that $\bold K_s=\bold K_{-s}$, we obtain
 \[ \int_0^\infty e^{-\frac{1}{4\tau}}  \tau^{-s-1} e^{-\omega \tau}\, d\tau= 2^{s+1} {\omega}^{\frac{s}{2}}\bold K_s(\sqrt{ \omega}).\] 
Hence, multiplying by  $4^s\Gamma(s)$ we get:
	\[ \frac{1}{4^s\Gamma(s)} \int_0^\infty e^{-\frac{1}{4\tau}}  \tau^{-s-1} e^{-\omega \tau}\, d\tau = \frac{1}{2^{s-1}\Gamma (s) } {\omega}^{\frac{s}{2}}\bold K_s(\sqrt{ \omega})\] and therefore \eqref{fourfpsi}.
\end{proof}			
   
\subsection{Existence of the solution}
\label{exuniq}
We prove in this section the following existence theorem:
\begin{teo}\label{teo:sol}
There exists a
continuous solution of the problem \eqref{prob1} 
given by
\eqlab{ \label{solsts1} U(x,t)	=   \Psi_s(x,\cdot)*\varphi (t) := \int_{\R} \Psi_s(x,\tau)\varphi (t-\tau)\, d\tau.}
More precisely (inserting the definition \eqref{kern}) we have that
	\eqlab{\label{solsts2} U(x,t)=\frac{1}{4^s \Gamma(s)} x^{2s} \int_0^{\infty} e^{-\frac{x^2}{4\tau}} \tau^{-s-1}  \varphi(t-\tau) \, d\tau.}
\end{teo}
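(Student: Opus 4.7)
The plan is direct verification: the explicit formula \eqref{solsts1} has already been produced (motivated by the Laplace transform computation of the previous subsection), so it remains only to check that $U$ is well defined, of class $C^{2,1}$ on $(0,\infty)\times\R$, and satisfies the PDE together with the two boundary conditions. Well-definedness and the bound $|U(x,t)|\le\|\varphi\|_{\infty}$ follow immediately from $\varphi\in L^{\infty}(\R)$ combined with $\Psi_s(x,\cdot)\in L^1(\R)$, the latter being the content of the normalization \eqref{kencalc1}. The right to differentiate under the integral sign for $x>0$ is justified by the exponential decay of $\Psi_s$ and its $x$- and $t$-derivatives in $\tau$, uniformly on compact subsets of $\{x>0\}$.

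For the PDE itself, I would move all derivatives onto $\Psi_s$ and reduce to the pointwise identity
\[ \pd[t]{\Psi_s}(x,t) = \frac{1-2s}{x}\pd[x]{\Psi_s}(x,t)+\dpd[x]{\Psi_s}(x,t),\qquad x,t>0. \]
From $\log\Psi_s = 2s\log x - x^2/(4t) - (s+1)\log t + \mathrm{const}$, one reads off the logarithmic derivatives $\partial_x\Psi_s/\Psi_s = 2s/x - x/(2t)$ and $\partial_t\Psi_s/\Psi_s = x^2/(4t^2)-(s+1)/t$. A short calculation shows that the $1/x^2$ contributions of $\partial_{xx}\Psi_s$ and of $\frac{1-2s}{x}\partial_x\Psi_s$ cancel exactly --- this is precisely the role of the singular coefficient $(1-2s)/x$ --- and the remaining terms match $\partial_t\Psi_s/\Psi_s$ on the nose.

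The main remaining point is the boundary condition $U(x,t)\to\varphi(t)$ as $x\to 0^+$, which I would establish by treating $\Psi_s(x,\cdot)$ as an approximate identity concentrated at $\tau=0^+$. Subtracting $\varphi(t)=\int_{\R}\Psi_s(x,\tau)\varphi(t)\,d\tau$ via \eqref{kencalc1}, one has to show that
\[ \int_{0}^{\infty}\Psi_s(x,\tau)\bigl(\varphi(t-\tau)-\varphi(t)\bigr)\,d\tau \longrightarrow 0 \qquad\text{as } x\to 0^+. \]
Splitting the domain at a small $\delta>0$, on $\{0<\tau\le\delta\}$ the local $C^{\bar\gamma}$ regularity of $\varphi$ controls the integrand by $C\tau^{\bar\gamma}\Psi_s(x,\tau)$, whose integral is $O(\delta^{\bar\gamma})$ uniformly in $x$ after the rescaling $\tau=x^2\sigma$; on $\{\tau>\delta\}$ the same substitution converts the tail into $2\|\varphi\|_{\infty}\int_{\sigma>\delta/x^2}\psi_s(\sigma)\,d\sigma$, which vanishes as $x\to 0^+$ because $\psi_s\in L^1(\R)$ (by \eqref{bla4}--\eqref{bla2}) and the lower limit escapes to $+\infty$.

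I expect the most delicate point to be the far-field condition $\lim_{x\to\infty}U(x,t)=0$: since the mass of $\Psi_s(x,\cdot)$ concentrates around $\tau\sim x^2/(4(s+1))$, the convolution samples $\varphi$ at arguments $t-\tau\to-\infty$, so the limit is not automatic for a generic bounded $\varphi$. Under a mild vanishing hypothesis on $\varphi$ at $-\infty$ (or, alternatively, reading \eqref{prob1} as selecting the particular solution \eqref{solsts1} among the non-unique solutions of the heat conduction problem), this limit is obtained by dominated convergence after the same rescaling $\tau=x^2\sigma$. Continuity of $U$ on the closed half-space then follows from the standard continuity of convolutions on $\{x>0\}$ together with the boundary limit just established.
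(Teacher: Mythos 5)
Your verification follows essentially the same route as the paper's: the Laplace-transform derivation is treated as motivation only, and the theorem is then established by direct check of the convolution formula. The paper carries out the PDE verification by brute force on $V=4^s\Gamma(s)U$ (computing $\partial_x V$, $\partial_{xx}V$, and $\partial_t V$ after moving the $t$-derivative onto the kernel, then substituting); your reduction to the pointwise identity $\partial_t\Psi_s=\frac{1-2s}{x}\partial_x\Psi_s+\partial_{xx}\Psi_s$ via logarithmic derivatives is an equivalent and somewhat tidier packaging of the same computation, and your justification of differentiating under the integral (smoothness of $\Psi_s(x,\cdot)$ across $\tau=0$ for $x>0$ and its power decay at $\tau=\infty$) is exactly what is implicitly used. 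For the trace at $x=0$ the paper rescales $\tau\mapsto\tau/x^2$ and applies dominated convergence using only boundedness and continuity of $\varphi$; your approximate-identity splitting at $\delta$ proves the same limit (with a rate, in fact), at the mild cost of invoking the H\"older bound, which is not needed for this particular step.

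Where you genuinely depart from the paper is the far-field condition, and there your caution detects an actual defect in the paper's own argument. The paper claims $\lim_{x\to\infty}U(x,t)=0$ by dominated convergence using the bound $x^{2s}e^{-x^2/(4\tau)}\leq M e^{-1/(4\tau)}$ ``for $x$ large enough,'' but no constant $M$ independent of $x$ can make this hold uniformly in $\tau$: at $\tau=x^2$ the left-hand side is $x^{2s}e^{-1/4}\to\infty$ while the right-hand side stays bounded. Worse, the conclusion itself fails for generic bounded $\varphi$: by \eqref{kencalc1} one has $\int_{\R}\Psi_s(x,\tau)\,d\tau=1$ for \emph{every} $x>0$ (the change of variables in \eqref{bla2} eliminates $x$ entirely), so $\varphi\equiv 1$ gives $U\equiv 1$ and $\lim_{x\to\infty}U=1\neq 0$. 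Exactly as you observe, the kernel mass concentrates near $\tau\approx x^2/(4(s+1))$, so $U(x,t)$ samples $\varphi$ near $-\infty$, and some additional hypothesis --- e.g.\ $\varphi(t)\to 0$ as $t\to-\infty$, or reading the third line of \eqref{prob1} as a normalization selecting this particular solution of the non-unique heat-conduction problem --- is genuinely required; with it, your rescaled dominated-convergence argument closes the step. Note that this repair does not affect the rest of the paper: Theorem \ref{teo:mainstat} and the Harnack application use only the explicit formula \eqref{solsts2} and the behavior as $x\to 0^+$, where both your argument and the paper's are sound.
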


 Before proving this theorem, we recall a useful result (see \cite{FerFra}, Proposition 4.1) involving the modified Bessel function of the second kind. 
\begin{prop}{\label{prop:FerFra}} 
If $-\infty <\alpha<1$, the boundary value problem 
	\syslab{ \label{proby1}
		& x^{\alpha} y''(x) =y(x) &\mbox{in } &(0,\infty)\times \R\\
		& y(0)=1  & &\\
		& \lim_{x \to \infty} y(x) =0. &&}
		has a solution $\psi \in C^{2-\alpha} \lr{[0,\infty)}$ of the form
	\eqlab{\label{soly1} 
			\psi(x)= c_{k} x^{\frac{1}2}\bold{K}_{\frac{1}{2k}} \lr{\frac{t^k}k},} where 
			\[ c_k = \frac{2^{1-\frac{1}{2k}} k^{-\frac{1}{2k}}}{\Gamma \lr{\frac{1}{2k}}} \] is a positive constant and
				\[ k := \frac{2-\alpha}{2}.\]
				\end{prop}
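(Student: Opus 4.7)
The plan is to reduce the ODE $x^{\alpha} y''=y$ to the modified Bessel equation by a power-type change of the independent variable, tuned so that the singular coefficient $x^{\alpha}$ disappears. Concretely, set $\xi = x^{k}/k$ with $k=(2-\alpha)/2$ (so $\alpha+2k-2=0$), and write $y(x)=v(\xi)$. A direct computation gives $y'(x)=v_\xi\,x^{k-1}$ and $y''(x)=v_{\xi\xi}\,x^{2k-2}+(k-1)\,v_\xi\,x^{k-2}$; substituting into $x^{\alpha}y''=y$ and multiplying by $\xi^{2}$ yields the equation
\[
\xi^{2}v_{\xi\xi}+\frac{k-1}{k}\,\xi\,v_\xi-\xi^{2}\,v=0.
\]
This is not yet standard, so I would perform a second substitution $v(\xi)=\xi^{\mu}f(\xi)$. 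Matching the first-order coefficient to the Bessel normalization forces $\mu=\frac{1}{2k}$, and a short algebraic check shows that with this choice the zeroth-order coefficient becomes $-\mu^{2}$, so $f$ solves exactly the modified Bessel equation
\[
\xi^{2}f''+\xi f'-\bigl(\xi^{2}+\nu^{2}\bigr)f=0,\qquad \nu=\frac{1}{2k}.
\]

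The general solution is $f=A\,I_{\nu}(\xi)+B\,\mathbf{K}_{\nu}(\xi)$. Since $I_{\nu}(\xi)$ grows like $e^{\xi}/\sqrt{\xi}$ and $\mathbf{K}_{\nu}(\xi)$ decays like $e^{-\xi}/\sqrt{\xi}$ as $\xi\to\infty$, the boundary condition $\lim_{x\to\infty}y(x)=0$ forces $A=0$. Unwinding the substitutions gives $y(x)=B\,\xi^{1/(2k)}\mathbf{K}_{\nu}(\xi)=B\,k^{-1/(2k)}\,x^{1/2}\mathbf{K}_{1/(2k)}(x^{k}/k)$, i.e.\ the claimed functional form. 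To pin down $B$ from $y(0)=1$, I would use the small-argument asymptotic $\mathbf{K}_{\nu}(z)\sim\tfrac{1}{2}\Gamma(\nu)(z/2)^{-\nu}$ as $z\to 0^{+}$ (valid since $0<\nu<1$ in the range $\alpha<1$). Inserting $z=x^{k}/k$ and using $k\nu=\tfrac{1}{2}$ makes the factor $x^{1/2}$ exactly cancel $z^{-\nu}$, giving
\[
\lim_{x\to 0^{+}}x^{1/2}\mathbf{K}_{\nu}(x^{k}/k)=\tfrac{1}{2}\Gamma(\nu)\,(2k)^{\nu}.
\]
Solving $B\cdot\tfrac{1}{2}\Gamma(\nu)(2k)^{\nu}=1$ gives precisely $B=c_{k}=2^{1-1/(2k)}k^{-1/(2k)}/\Gamma(1/(2k))$.

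For the regularity statement $\psi\in C^{2-\alpha}([0,\infty))$, away from the origin $\mathbf{K}_{\nu}$ is real-analytic on $(0,\infty)$ and the composition with $x^{k}/k$ together with the prefactor $x^{1/2}$ is smooth. The only issue is the behavior at $x=0$. Here I would use the non-integer expansion $\mathbf{K}_{\nu}(z)=\frac{\pi}{2\sin(\pi\nu)}\bigl(I_{-\nu}(z)-I_{\nu}(z)\bigr)$, each $I_{\pm\nu}$ being an entire function of $z^{2}$ times $z^{\pm\nu}$. Substituting $z=x^{k}/k$ and multiplying by $x^{1/2}$, the $I_{-\nu}$-series contributes powers $x^{2mk}$ ($m\ge 0$) and the $I_{\nu}$-series contributes powers $x^{1+2mk}$ ($m\ge 0$), so
\[
\psi(x)=1+a_{1}x+a_{2}x^{2k}+a_{3}x^{2k+1}+a_{4}x^{4k}+\cdots
\]
Since $2k=2-\alpha$, the first non-smooth exponent is exactly $2-\alpha$, which gives the stated $C^{2-\alpha}$ regularity at the origin.

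The main obstacle I anticipate is the two nested substitutions in the first paragraph: the choice of exponents $k$ and $\mu$ must be dictated by simultaneously killing the $x^{\alpha}$ factor and matching both the first-order and zeroth-order coefficients of the modified Bessel equation. Once these are identified, the rest is asymptotic bookkeeping with well-known expansions of $\mathbf{K}_{\nu}$.
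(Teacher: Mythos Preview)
Your argument is correct. The reduction via $\xi=x^{k}/k$ followed by $v=\xi^{1/(2k)}f$ is exactly the standard way to transform $x^{\alpha}y''=y$ into the modified Bessel equation of order $\nu=1/(2k)$; your computation of the first- and zeroth-order coefficients checks out, the selection $A=0$ from the decay condition is right, and the small-argument asymptotic of $\mathbf{K}_{\nu}$ indeed pins down the constant $c_{k}$. One small notational slip: in the line ``Solving $B\cdot\tfrac{1}{2}\Gamma(\nu)(2k)^{\nu}=1$ gives $B=c_{k}$'' you have silently absorbed the factor $k^{-1/(2k)}$ coming from $\xi^{1/(2k)}=k^{-1/(2k)}x^{1/2}$ into $B$; the arithmetic is nevertheless correct, since $k^{-1/(2k)}(2k)^{\nu}=2^{\nu}$ and the final value of the coefficient in front of $x^{1/2}\mathbf{K}_{\nu}(x^{k}/k)$ is exactly the paper's $c_{k}$. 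Your expansion for the $C^{2-\alpha}$ regularity at the origin, via $\mathbf{K}_{\nu}=\frac{\pi}{2\sin\pi\nu}(I_{-\nu}-I_{\nu})$ and the resulting power series in $x^{2mk}$ and $x^{1+2mk}$, is also correct and identifies $x^{2k}=x^{2-\alpha}$ as the first non-smooth term.

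For comparison with the paper: the paper does \emph{not} give its own proof of this proposition. It is quoted verbatim from \cite{FerFra}, Proposition~4.1, and used as a black box in the proof of Theorem~\ref{teo:sol}. So there is no ``paper's proof'' to compare against; your derivation simply supplies what the paper outsources to the reference, and it does so by the natural route.
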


We continue by showing how the solution of problem \eqref{prob1} arises, by means of the Laplace transform (see \cite{dyke} for details on this integral transform).
 \begin{proof}[Proof of Theorem \ref{teo:sol}]
 
 First, we look for a possible candidate of a solution of the problem \eqref{prob1}. In particular,  we put ourselves in the simplified situation that $U$ has a sub-exponential growth in $t$ and that the function $\varphi$ is zero on the negative semi-axis $(-\infty,0]$.
 Under this additional hypothesis, we take the Laplace transform in $t$ of the system \eqref{prob1}. Since the derivative of the Laplace transform acts as 
\bgs{ \La (f')(\omega) =  \omega \mathcal{L}f(\omega),} we get that
\sys{
	& \omega \mathcal{L} U(x,\omega) = \frac{1-2s}x \pd[x]{ \mathcal{L} U}(x,\omega) + \dpd[x]{ \mathcal{L} U}(x,\omega), 	&\mbox{in } &(0,\infty)\times \R\\
	&\mathcal{L} U(0,t)=\mathcal{L} \varphi(\omega),  &\mbox{in } &\R\\
	&\lim_{x \to \infty} \mathcal{L}U(x,\omega)=0, &\mbox{in } &\R.}
We define 
\eqlab{ \label{deff} f(x):= \mathcal{L} U(x,\omega),}
 then $f $ must be solution of the system
 	\syslab{ \label{probf1}
 	&  \omega  f(x)= \frac{1-2s}x  f'(x) +  f''(x),	 &\mbox{in } &(0,\infty)\\
	&f(0)=\mathcal{L} \varphi(\omega) & &\\
	&\lim_{x \to \infty} f(x)=0.&&}
	
	We consider the problem \eqref{proby1} for $\alpha=\frac{2s-1}{s}$ (notice that for $s\in(0, 1),$ we indeed have that $\alpha\in (-\infty,1)$)
	\sys{ \label{proby11}
		& x^{\frac{2s-1}s} y''(x) =y(x) &\mbox{in } &(0,\infty)\times \R\\
		& y(0)=1  & &\\
		& \lim_{x \to \infty} y(x) =0. &&}
		We observe that by taking 
		\eqlab{\label{chvarfy} f(x) = \mathcal{L} \varphi (\omega) y\lr{ \omega^s \lr{\frac{x}{2s}}^{2s}}}
		one obtains that $f$ satisfies problem \eqref{probf1}.
		Indeed, we have that
		\[ y'\lr{ \omega^s \lr{\frac{x}{2s}}^{2s}} = \frac{1}{\mathcal{L} \varphi(\omega) }\omega^{-s} (2s)^{2s-1} x^{1-2s} f' (x),\]
		and 
		\bgs{ y''\lr{ \omega^s \lr{\frac{x}{2s}}^{2s}} = \al f''(x) \frac{1}{\mathcal{L} \varphi(\omega)} \omega^{-2s} (2s)^{4s-2} x^{2-4s} \\ \al + f'(x)\frac{1-2s}{\mathcal{L} \varphi (\omega)}(2s)^{4s-2}\omega^{-2s}  x^{1-4s} .}
		Then, since by \eqref{proby1} we have 
			\[ \lr{ \omega^s \lr{\frac{x}{2s}}^{2s}} ^{\frac{2s-1}s} y''\lr{ \omega^s \lr{\frac{x}{2s}}^{2s}} =y\lr{ \omega^s \lr{\frac{x}{2s}}^{2s}} ,\] this implies that 
			\[ f(x)=\omega^{-1} \lr{f''(x) +(1-2s) x^{-1} f'(x)},\] which is the first equation in \eqref{probf1}. The fact that $f(0)= \mathcal{L} \varphi(\omega)$  easily follows.

A solution of problem \eqref{proby1} is given explicitly in Proposition \ref{prop:FerFra}. For $\alpha=\frac{2s-1}{s}$, we have $k=\frac{1}{2s}$ and
	\eqlab{\label{soly2} y(x)= \frac{2^{1-s}(2s)^s}{\Gamma(s)}x^{\frac{1}{2}} \bold{K}_s\lr{2s x^{\frac{1}{2s}}}.}
	Inserting this into \eqref{chvarfy}, we obtain that 
	\bgs{ f(x)= \mathcal{L} \varphi(\omega) \frac{2^{1-s}}{\Gamma(s)} \omega^{\frac{s}2} {x^s} \bold{K}_s(x\sqrt{\omega} ) .}
	Hence, by \eqref{deff}, we have that
	\[ \mathcal{L} U(x,\omega) = \mathcal{L}\varphi(\omega) \frac{2^{1-s}}{\Gamma(s)} \omega^{\frac{s}2} {x^s} \bold{K}_s(x\sqrt{\omega} ).\]
	By taking the inverse Laplace transform, recalling that the pointwise product is taken into the convolution product, we obtain that
		\eqlab{\label{bla1}  U(x,t)= \al \frac{2^{1-s}}{\Gamma(s)} \varphi * \La^{-1} \lr{ \omega^{\frac{s}2}{x^s}  \bold{K}_s(x\sqrt{\omega} ) }(t).}
			From \eqref{calcKpsi} we have that 
		\bgs{ \La (\Psi_s)(x,\omega)= \frac{\Gamma(s) }{2^{1-s}}  x^s \omega^{\frac{s}2} \bold K_s \lr{x \sqrt{\omega}}.}
			Hence in \eqref{bla1} we obtain the following Laplace convolution
			\eqlab {\label{bla3} U(x,t) = \phi*\Psi_s(x,t) = \int_0^t \Psi(x,\tau) \phi(t-\tau).}

We recall now that we took the function $\phi$ to be vanishing for $t\in (-\infty,0)$. 
Hence, it is reasonable to suppose that the above formula holds true also for a function that is not a signal. Hence, now  we assume that $\varphi$ is defined on the entire axis  $\R$,  without the assumption that $\varphi$ vanishes in $(-\infty,0).$ We claim that $\phi*\Psi_s$ still defines a solution of the problem \eqref{prob1} as in \eqref{solsts1}.
Indeed, we show now that  the function $U$ defined in \eqref{solsts1} solves the system \eqref{prob1}.

In definition \eqref{solsts1}, we call
	\sys[A_{x,\tau}:= ] { \al e^{-\frac{x^2}{4\tau}}\tau^{-s-1}, &\mbox{if } &\tau>0\\
	\al 0&\mbox{if } &\tau\leq 0 }
	and notice that
		\sys[\frac{ \partial A_{x,\tau}}{\partial x}= ] { \al  -\frac{x}{2\tau}A_{x,\tau}, &\mbox{if } &\tau>0\\
	\al 0&\mbox{if } &\tau\leq 0.}
	Let 
		\bgs{ V(x,t):=4^s\Gamma(s) U(x,t) =  x^{2s}\int_\R A_{x,\tau}\varphi(t-\tau)\, d\tau.}
Taking the derivative with respect to $x$ of $V(x,t)$ we have that
	\bgs{ \pd[x]{ V} (x,t)= 2s x^{2s-1} \int_\R A_{x,\tau} \phi(t-\tau) \, d\tau -\frac{x^{2s+1}}2  \int_\R \frac{A_{x,\tau}}{\tau} \phi(t-\tau) \, d\tau,}
	and
	\bgs{ \dpd[x]{V}(x,t)=\al  2s(2s-1) x^{2s-2}  \int_\R A_{x,\tau} \phi(t-\tau) \, d\tau - \frac{(4s+1)x^{2s}}{2}  \int_\R \frac{A_{x,\tau}}{\tau} \phi(t-\tau) \, d\tau\\ \al  + \frac{x^{2s+2}}4  \int_\R \frac{A_{x,\tau}}{\tau^2} \phi(t-\tau) \, d\tau.} 
Then, by changing variables, we write
	\[ V(x,t)=x^{2s} \int_\R A_{x,t-\tau} \varphi(\tau) \, d\tau ,\] and taking the derivative with respect to $t$ we have 
	\[\pd[t]{ V}(x,t)= x^{2s} \int_\R  \lrq{x^2 \frac{A_{x,t-\tau}}{4(t-\tau)^2 } \varphi(\tau) -(s+1) \frac{A_{x,t-\tau}}{(t-\tau) }\varphi(\tau) } \, d\tau.\]
	We change back variables to obtain	
	\[\pd[t]{ V}(x,t)  =- x^{2s+2}\int_\R  \frac{A_{x,\tau}}{4\tau^2 } \varphi(t-\tau)\, d\tau  -(s+1)x^{2s} \int_\R \frac{A_{x,\tau}}{\tau }\varphi(t-\tau) \, d\tau.\]
	By substituting these computations, we obtain that indeed $V$, hence $U$ by the definition of $V$, satisfies the equation 
	\[ \pd[t]{ U(x,t) }= \frac{1-2s}{x}\pd[x]{ U(x,t) } +\dpd[x]{ U(x,t) }.\] 
	
	Moreover, using for $x$ large enough the bound
		\[ x^{2s}e^{-\frac{x^2}{4\tau}} \leq M e^{-\frac{1}{4\tau}},\] 
		thanks to the Dominated Convergence Theorem and the limit
			\[\lim_{x\to \infty} x^{2s} e^{-\frac{x^2}{4\tau}} =0,\]
			it yields that \[\lim_{x\to \infty} U(x,t)=0.\]
			
Furthermore, in \eqref{solsts1} by changing variables $\tilde \tau =\displaystyle \frac{\tau}{x^2}$ (but still using $\tau$ as the variable of integration), we have that
		\bgs{ U(x,t)= \frac{1}{4^s\Gamma(s)} \int_0^\infty e^{-\frac{1}{4\tau}} \tau^{-s-1} \phi(t-\tau x^2) \, d\tau.}
		Since $\phi$ is bounded, by the Dominated Convergence Theorem, we have that
		\bgs{\lim_{x\to 0} U(x,t) =\al  \frac{\phi(t)}{4^s\Gamma(s)}  \int_0^\infty e^{-\frac{1}{4\tau}} \tau^{-s-1}  \, d\tau
		=\phi(t),}
		according to \eqref{bla2}. This proves the continuity up to the boundary of the solution $U,$ concluding the proof that the function $U$ defined in \eqref{solsts1} is a continuous solution to the problem \eqref{prob1}.	 
\end{proof}

\section{Relation with the Marchaud fractional derivative}

We  prove here the relation between the parabolic equation studied in Subsection \ref{exuniq} and the Marchaud fractional derivative. Namely, the Marchaud derivative is obtained, in a certain sense, as the trace operator of the extension given by the solution of \eqref{prob1}.

\begin{proof}[Proof of Theorem \ref{teo:mainstat}]
By inserting the expression of $U(x,t)$ from \eqref{solsts2}, we compute 
\bgs{ \lim_{x \to 0^+} x^{-2s} \lr{U(x,t)-\varphi(t)} =\al \lim_{x \to 0^+}x^{-2s}\left( \frac{1}{4^s\Gamma(s)} \int_0^\infty x^{2s}e^{-\frac{x^2}{4\tau}} \tau^{-s-1} \varphi(t-\tau) \, d\tau - \varphi (t)\right).}
Recalling property \eqref{kern} of the kernel, we have that
	\bgs{ \lim_{x \to 0^+} x^{-2s} \lr{U(x,t)-\varphi(t)} =\al \lim_{x \to 0^+} \frac{x^{-2s}}{4^s\Gamma(s)} \int_0^\infty x^{2s}e^{-\frac{x^2}{4\tau}} \tau^{-s-1} \lr{ \varphi(t-\tau) - \varphi (t)} \, d\tau\\
	=\al \lim_{x\to 0^+} \frac{1}{4^s \Gamma(s)} \int_0^\infty e^{-\frac{x^2}{4\tau}}  \frac{ \varphi(t-\tau)-\varphi(t)}{\tau^{s+1}}.}
Now \[ e^{-\frac{x^2}{4\tau}}  \leq 1\] and since $\varphi$ is bounded, we have that 
	\[ \frac{|\varphi(t-\tau)-\varphi(t) |}{\tau^{s+1}} \leq 2M \tau^{-s-1} \in L^1\lr{(1,\infty)}.\]
	On the other hand, recalling that $\varphi$ is locally $C^{\bar{\gamma}}$ we have that
		\[ |\phi(t) - \varphi(t-\tau)|\leq c\tau^{\bar{\gamma}}.\]
		Hence, since $\bar{\gamma}>s$,
			\[    \frac{|\varphi(t-\tau)-\varphi(t) |}{\tau^{s+1}} \leq  c \tau^{\bar{\gamma}-s-1} \in L^1\lr{(0,1)}.\]
			Using the Dominated Converge Theorem, we obtain
\begin{equation}\label{derivative_right}
 \begin{split}\lim_{x \to 0^+} x^{-2s} \left(U(x,t)-\varphi(t)\right) &= \frac{1}{4^s \Gamma(s)} \int_0^\infty \lim_{x\to 0^+} e^{-\frac{x^2}{4\tau}}  \frac{ \varphi(t-\tau)-\varphi(t)}{\tau^{s+1}} \\
				&= \frac{1}{4^s \Gamma(s)} \int_0^\infty\frac{ \varphi(t-\tau)-\varphi(t)}{\tau^{s+1}}.
				\end{split}
				\end{equation}
	Hence for $c_s=4^s\Gamma(s),$ we obtain
		\begin{equation*}
		\begin{split} -  c_s \lim_{x \to 0^+} x^{-2s} \lr{U(x,t)-\varphi(t)} = \int_0^\infty\frac{ \varphi(t)-\varphi(t-\tau) }{\tau^{s+1}}
		= \D^s \varphi(t)
		\end{split}
		\end{equation*}
		by definition \eqref{frader}. This concludes the proof.
\end{proof}

\section{Applications: A Harnack inequality for Marchaud-stationary functions }

In this part of the paper we prove a Harnack inequality for functions that have a vanishing Marchaud derivative in a bounded interval $J$, namely we prove here Theorem \ref{teoHarn}. At this purpose, we use a known Harnack inequality for degenerate parabolic operators, that can be found in \cite{CS}, see Theorem 2.1. For the reader convenience we recall in Proposition \ref{Chiarenza_Serapioni} this result in the case $n=1.$ 
%

 \subsection{Preliminary notions}
We would like to point out that the result given in \cite{CS} was introduced for $n\geq 3$. Nevertheless the same proof works also for $n = 1$ with some adjustments. We recall here the main hypotheses we need to apply the Theorem, adapted in our case $n=1$. It is worth to say that this problem has been studied in a more general fashion in \cite{GuWhee2} and \cite{GuWhee1}.

The degenerate parabolic 
\begin{equation}\label{specific_heat}
w(x)\frac{\partial u}{\partial t}=\pd[x]{}\lr{w(x)\pd[x]{U}},
\end{equation}
is given in $Q=(-R,R)\times (0,T)$, for $R>0$. 

In this particular case, the conductivity coefficient (i.e. the coefficient in front of the $x$ derivative) and the specific heat (the coefficient of the $t$ derivative) coincide. In \cite{CS}, the equation was studied for different coefficients. A more general form of the equation is given in these terms: 
\eqlab{\label{genst1}
w(x)\frac{\partial u}{\partial t}=\pd[x]{}\lr{a(x)\pd[x]{U}}.
}In that case, one requires that
	\[ \lambda^{-1} w(x) \leq a(x)\leq \lambda w(x) \] 
and an integrability condition (also known as a Muckehoupt, or $A_2$ weight condition) on the weight $w$, given by  
\begin{equation}\label{2-weight}
\sup_{J}\lr{\frac{1}{|J|}\int_{J}w(x)\, dx}\, \lr{\frac{1}{|J|}\int_{J}\frac{1}{w (x)}\, dx}=c_0<\infty,
\end{equation}
for any interval $J \subseteq (-R,R)$. The constant $c_0$ is indicated as the $A_2$ constant of $w$. 
In our case, of course, we are left with the condition \eqref{2-weight}.

In addition we consider the functional space
\[ W:=\left\{u\in L^2(0,T;H^{1}_0(J,w)) \mbox{ s.t. }\quad \frac{\partial u}{\partial t}\in  L^2(0,T;L^2(J,w))\right\}.\]
We denote here by $L^2(J,w),$ the Banach space of measurable functions $u$ with finite weighted norm
$$
\|u\|_{p,w;J}=\lr{\int_{J}|u|^2 w \,dx}^{1/2 }<\infty,
$$
by $H^1(J,w)$ the completion of $C^{\infty}(\overline{J})$ under the norm
$$
\|u\|_{1,w;J}=\lr{\int_{J}(u^2+|\partial_x u|^2)w \, dx}^{1/2}
$$
and by $H^1_0 (J,w)$ the completion of $C^{\infty}_0$ under the norm
$$
\|u\|_{1,w;J}=\lr{\int_{J}|\partial_x u|^2w \,dx}^{1/2}.
$$ The time dependent Sobolev space $L^2\lr{0,T; H_0^1(J,w)}$ is defined as the set of all measurable functions $ u$ such that
 \[ \|u\|_{L^2\lr{0,T; H_0^1(J,w)}}  := \lr{ \iint_{(0,T)\times J} |u(x,t)|^2 w(x)\, dx \, dt }^{\frac{1}2}< \infty.\] 

In this setting, we introduce the notion of weak solution of problem \eqref{specific_heat}.
\begin{defn}
We say that $u\in L^2(0,T;H^1(J,w))$ is a weak solution of \eqref{specific_heat} in $J \times (0,T)$ if, for every $\eta\in W,$ such that $\eta(0)=\eta(T)=0$ we have that
\begin{equation}\label{weak_solution}
\iint_{J\times (0,T)} w(x) \left(\pd[x] {u} \pd[x] {\eta} - u\pd[t]{\eta}\right)\, dx\, dt=0.
\end{equation}
\end{defn}

We have the next proposition (see Theorem $2.1$ in \cite{CS}). 
\begin{prop}\label{Chiarenza_Serapioni}
Let $u$ be a positive solution in $(-R,R)\times (0,T)$ of (\ref{specific_heat}) and assume that condition \eqref{2-weight} holds, with constant $c_0$. Then there exists $\gamma=\gamma(c_0)>0$ such that
\begin{equation}
\sup_{\lr{t_0-\frac{3\rho^2}4 , t_0-\frac{\rho^2}{4}}\times-\lr{\frac{\rho}2,\frac{\rho}2}} u\leq \gamma \inf_{\lr{t_0+\frac{3\rho^2}4 , t_0+\rho^2}\times-\lr{\frac{\rho}2,\frac{\rho}2}}u
\end{equation} \label{paraharnack}
holds for $t_0\in (0,T)$ and any $\rho$ such that $0<\rho<R/2$ and $[t_0-\rho^2,t_0+\rho^2]\subset (0,T)$.

\end{prop}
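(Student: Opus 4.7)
The plan is to adapt Moser's iteration technique to the weighted one-dimensional parabolic setting, as developed originally by Chiarenza and Serapioni in \cite{CS} for $n\geq 3$. The key structural fact that makes the Muckenhoupt $A_{2}$ hypothesis \eqref{2-weight} usable is that the measure $d\mu:=w(x)\,dx$ is doubling, so that $((-R,R),|\cdot|,d\mu)$ becomes a space of homogeneous type supporting weighted Sobolev and Poincar\'e inequalities. Once these are available, the scheme of the proof is standard: Moser iteration on each side of the crossover, plus a logarithmic estimate to bridge positive and negative powers.

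First I would derive the weighted parabolic Caccioppoli estimate. Testing \eqref{specific_heat} against $\eta^{2} u^{q}$ for a smooth space-time cutoff $\eta$ supported in a cylinder $Q$, integrating by parts, and using $q\neq -1$, one controls $\sup_{t}\int \eta^{2} u^{q+1} w\,dx$ and $\iint \eta^{2} \lr{\partial_{x} u^{(q+1)/2}}^{2} w\,dx\,dt$ by $\iint \lr{|\partial_{x}\eta|^{2}+|\partial_{t}\eta|}\, u^{q+1} w\,dx\,dt$. Combining this with a weighted Sobolev embedding of Fabes-Kenig-Serapioni type, with gain exponent $\kappa>1$ depending only on $c_{0}$, gives a reverse-H\"older self-improvement. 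Iterating on a telescoping sequence of shrinking cylinders yields, for every $p>0$, a subsolution bound of the form $\sup_{Q'} u \leq C(p,c_{0})\lr{\mu(Q)^{-1}\iint_{Q} u^{p}\,w\,dx\,dt}^{1/p}$ on positive subsolutions, and a symmetric reverse bound $\inf_{Q'} u \geq c(p,c_{0})\lr{\mu(Q)^{-1}\iint_{Q} u^{-p}\,w\,dx\,dt}^{-1/p}$ on positive supersolutions, valid on suitable sub-cylinders $Q'\subset Q$.

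The heart of the argument is the crossover between positive and negative powers. Setting $v:=\log u$ and testing \eqref{specific_heat} against $\eta^{2}/u$, one obtains a Caccioppoli-type estimate for $\partial_{x}v$; combined with the parabolic structure, this yields a weighted parabolic John-Nirenberg (equivalently Bombieri-Giusti crossover) statement: the past mean of $v$ over one cylinder and its future mean over a disjoint cylinder differ by a bounded constant, while super-level sets decay exponentially. This bridges the $L^{p}$ estimates for $p>0$ and $p<0$ across the time slice $t_{0}$ and, after choosing $p$ small enough, produces the Harnack comparison. The asymmetry between the past cylinder $(t_{0}-\tfrac{3\rho^{2}}{4},t_{0}-\tfrac{\rho^{2}}{4})\times(-\tfrac{\rho}{2},\tfrac{\rho}{2})$ and the forward cylinder $(t_{0}+\tfrac{3\rho^{2}}{4},t_{0}+\rho^{2})\times(-\tfrac{\rho}{2},\tfrac{\rho}{2})$ in the conclusion is exactly the irreversibility imprinted by the parabolic equation.

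The main obstacle is verifying the three weighted functional inequalities in the one-dimensional regime: the Sobolev-type embedding $H^{1}_{0}(J,w)\hookrightarrow L^{2\kappa}(J,w)$ with genuine gain $\kappa>1$, the weighted Poincar\'e inequality on intervals, and the weighted John-Nirenberg lemma. For $n\geq 3$ these are the inputs used directly in \cite{CS}; for $n=1$ the unweighted Sobolev embedding already yields $H^{1}\hookrightarrow L^{\infty}$, so the gain $\kappa$ must instead be extracted from the reverse-H\"older exponent of $w\in A_{2}$. These are precisely the $n=1$ adjustments alluded to by the authors. Once they are established, the Moser iteration and the Bombieri-Giusti crossover proceed exactly as in \cite{CS}, and the resulting constant $\gamma$ depends only on $c_{0}$.
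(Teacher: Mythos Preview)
The paper does not give a proof of this proposition at all: it is quoted verbatim as Theorem~2.1 of \cite{CS} and used as a black box, with only the remark that the original result was stated for $n\geq 3$ but ``the same proof works also for $n=1$ with some adjustments.'' So there is no proof in the paper to compare your proposal against.

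What you have written is a reasonable outline of the Chiarenza--Serapioni argument itself (Caccioppoli $\to$ weighted Sobolev $\to$ Moser iteration on both sides, plus a logarithmic/Bombieri--Giusti crossover), and you correctly identify that the only nontrivial issue in dimension one is the Sobolev gain $\kappa>1$, which must come from the $A_2$ reverse-H\"older property rather than from the usual critical embedding. That is exactly the ``adjustment'' the authors allude to. As an expository sketch this is fine; just be aware that for the purposes of the present paper no such argument is expected---the proposition is a citation, not a result to be proved.
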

\begin{oss}
The reader can easily imagine the general situation in any dimension as explicated in Theorem 1.2 in \cite{CS}, where the coefficient $a(x)$ in \eqref{genst1} is a matrix and the domains are cylinders.  
We have stated the Harnack inequality in $ (0,T).$ Nevertheless with a change of coordinates in space and time, we can always say that the Harnack inequality holds in any subset of $(R_1,R_2)\times(\tau_1,\tau_2),$ where $R_1,R_2,\tau_1,\tau_2\in \mathbb{R}.$ 
\end{oss}
\subsection{Reflection of the solution}
We consider here that $\D^s \phi (t)=0$ in an interval $J$. By taking the reflection $\tilde U$ of the solution of problem \eqref{prob1}, we prove that $\tilde U$ is a solution in a weak sense of \eqref{prob1} across $x=0$.
 
It is useful to introduce a weak version of the limit $\displaystyle \lim_{x\to 0} x^{1-2s} \partial_x u(x,t)$. In this sense, we have:
\begin{defn}
We say that in a weak sense
\bgs{  \lim_{x\to 0} x^{1-2s}\pd[ x ]{U}(x,t) =0}
if and only if, for any  $\eta\in W,$ such that $\eta(0)=\eta(T)=0$ we have that
\eqlab{\label{weak_limit} \lim_{x\to 0} \int_0^T x^{1-2s} \pd[x]{ U} \,\eta\,dt =0.}
\end{defn}
\begin{lem}\label{tildeu}
Let $U\colon \R \times [0,\infty)\to \R$ be a solution of the problem (\ref{prob1}) such that, in a weak sense
$  \displaystyle\lim_{x\to 0^+} x^{1-2s} \partial_x U(x,t)=0.$
Then the extension 
\syslab [\tilde U(x,t):=] { & U(x,t), & (x,t)\in &[0,+\infty)\times (0,T)\\
&U(-x,t), & (x,t)\in& (-\infty,0)\times (0,T)}
is a weak solution of
 \eqlab{\label{weaksol}
\frac{\partial (|x|^{1-2s}U(x,t))}{\partial t} = \pd[x]{}\lr{|x|^{1-2s}\pd[x]{U(x,t)}}.
}
in $(-R,R)\times (0,T)$.
\end{lem}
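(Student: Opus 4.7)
The strategy is to verify the weak formulation directly: for every admissible test function $\eta\in W$ with $\eta(\cdot,0)=\eta(\cdot,T)=0$ (on the interval $(-R,R)$ with weight $|x|^{1-2s}$), I want to show
\begin{equation*}
\int_0^T\int_{-R}^R |x|^{1-2s}\bigl(\partial_x \tilde U\,\partial_x\eta-\tilde U\,\partial_t\eta\bigr)\,dx\,dt=0.
\end{equation*}
First I would split the spatial integral as $\int_{-R}^0+\int_0^R$ and, in the integral on $(-R,0)$, change variables $x\mapsto -x$. Because $\tilde U$ is even in $x$, while $\partial_x\tilde U$ is odd, the two minus signs produced by the change of variable combine to yield the same integrand as on $(0,R)$, but with $\eta(x,t)$ replaced by $\eta(-x,t)$. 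Setting $\zeta(x,t):=\eta(x,t)+\eta(-x,t)$, the problem thus reduces to proving
\begin{equation*}
\int_0^T\int_0^R x^{1-2s}\bigl(\partial_x U\,\partial_x\zeta-U\,\partial_t\zeta\bigr)\,dx\,dt=0.
\end{equation*}

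For this, I would use the fact that $U$ is a classical (indeed smooth) solution on $(0,\infty)\times(0,T)$, which allows the original equation in \eqref{prob1} to be rewritten in divergence form $x^{1-2s}\partial_t U=\partial_x\bigl(x^{1-2s}\partial_x U\bigr)$. Multiplying by $\zeta$ and integrating on $(\epsilon,R)\times(0,T)$, I would integrate by parts in $t$ (the temporal boundary terms vanish because $\zeta(x,0)=\zeta(x,T)=0$) and in $x$ (the term at $x=R$ vanishes because $\zeta(R,t)=0$, inherited from $\eta\in H^1_0$). The identity obtained is
\begin{equation*}
\int_0^T\int_\epsilon^R x^{1-2s}\bigl(\partial_x U\,\partial_x\zeta-U\,\partial_t\zeta\bigr)\,dx\,dt=-\int_0^T \epsilon^{1-2s}\partial_x U(\epsilon,t)\,\zeta(\epsilon,t)\,dt.
\end{equation*}

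The remaining step is to let $\epsilon\to 0^+$. The left-hand side converges to the desired integral by dominated convergence (the integrand is in $L^1(x^{1-2s}dx\,dt)$ by the regularity of $U$ and $\zeta$), while the right-hand side is exactly the quantity whose vanishing is guaranteed by the weak Neumann condition \eqref{weak_limit}, applied to the admissible test function $\zeta$. This is the one step that requires care: I must check that the symmetrization $\zeta$, restricted to $(0,R)$, is a legitimate test function in \eqref{weak_limit}, i.e. that it lies in $W$ on $(0,R)$ with weight $x^{1-2s}$ and vanishes at $t=0,T$. The temporal condition is immediate from the assumption on $\eta$; the membership in $W$ follows because both $\eta(x,t)$ and $\eta(-x,t)$ inherit the required Sobolev regularity from $\eta\in W$ on $(-R,R)$, and the weight $|x|^{1-2s}$ coincides with $x^{1-2s}$ on $(0,R)$. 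This compatibility of the test function classes is the main (though essentially bookkeeping) obstacle; once it is in place, combining the two halves of the original integral yields the weak formulation of \eqref{weaksol} and finishes the proof.
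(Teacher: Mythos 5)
Your proof is correct and follows essentially the same route as the paper's: both verify the weak formulation \eqref{bla22} directly by integrating by parts in $x$ and $t$, killing the boundary term at $x=0$ via the weak Neumann condition \eqref{weak_limit} and the terms at $x=\pm R$ and $t=0,T$ via the vanishing of the test function. The only difference is cosmetic: you fold the two half-integrals into one by the change of variables $x\mapsto -x$ and the symmetrized test function $\zeta(x,t)=\eta(x,t)+\eta(-x,t)$ (whose admissibility you rightly check), whereas the paper integrates by parts on $(0,R)$ and $(-R,0)$ separately and sums.
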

\begin{proof}
We claim that the extension $\tilde U$ is a weak solution of \eqref{weaksol}, hence that 
	\eqlab{\label{bla22} \int_{(-R,R)\times(0,T)} |x|^{1-2s}\lr{ \pd[x]{ \tilde U} \pd[x]{ \eta} - \tilde U \pd[t]{ \eta}}\, dx\, dt =0. }
	We compute, integrating by parts
	\bgs{  \int_0^T  \al \lr{  \int_0^R x^{1-2s} \pd[x]{ \tilde U} \, \pd[x]{ \eta} \, dx}\,dt \\ = \al  \int_0^T R^{1-2s} \pd[x]{ \tilde U} (R,t) \, \eta(R,t)  \, dt - \lim_{x\to 0} \int_0^T x^{1-2s}\pd[x]{ U} \,  \eta \, dt \\ \al - \int_0^T \lr{ \int_0^R \pd[x]{} \lr{ x^{1-2s} \pd[x] { \tilde U} } \eta \, dx }\, dt \\
	= \al \int_0^T R^{1-2s} \pd[x]{ \tilde U} (R,t)   \eta (R,t) \, dt  -\int_0^T\lr{ \int_0^R x^{1-2s}  \pd[t]{ \tilde U} \, \eta\, dx}\, dt ,}
	where we have used the weak limit in \eqref{weak_limit} and the fact that $\tilde U$ solves equation \eqref{weaksol}.
In the same way, one obtains that
 	\bgs{  \int_0^T  \al \lr{  \int_{-R}^0 (-x)^{1-2s} \pd[x]{ \tilde U} \, \pd[x]{ \eta} \, dx}\,dt =  \int_0^T R^{1-2s} \pd[x]{ \tilde U}  (-R,t)  \eta (-R,t)  \, dt  \\ \al -\int_0^T \lr{ \int_{-R}^0 (-x)^{1-2s}  \pd[t]{ \tilde U} \, \eta \, dx}\, dt ,}
 	therefore, by summing up, 
 	\bgs{ \int_{(-R,R)\times(0,T)} \al |x|^{1-2s}\pd[x] {\tilde U} \pd[x]{ \eta} \, dx \, dt  \\ = \al \int_0^T R^{1-2s} \lr{\pd[x]{ \tilde U} (R,t)  \eta(R,t)   - \pd[x]{ \tilde U}(-R,t) \eta (-R,t)} \, dt \\ \al -\int_0^T \lr{\int_{-R}^R |x|^{1-2s} \pd[t]{ \tilde U} \, \eta \, dx} \, dt.}
Hence
\bgs{ 	\int_{(-R,R)\times(0,T)} \al     |x|^{1-2s} \lr{ \pd[x]{\tilde U} \pd[x]{ \eta} - \tilde U \pd[t]{ \eta}}\, dx\, dt  \\ = \al    \int_0^T R^{1-2s}\lr{ \pd[x]{ \tilde U} (R,t) \eta(R,t)  - \pd[x]{ \tilde U}(-R,t)  \eta (-R,t) } \, dt  \\ \al   - \int_0^T\lr{ \int_{-R}^R |x|^{1-2s} \lr{\pd[t]{ \tilde U} \, \eta - \tilde U \pd[t]{ \eta}} \, dx}\, dt \\
	= \al \int_0^T R^{1-2s} \lr{ \pd[x]{ \tilde U} (R,t) \eta(R,t)  - \pd[x]{ \tilde U}(-R,t)  \eta (-R,t)  } \, dt \\ 
	\al-   \int_{-R}^R|x|^{1-2s} 	\lr{ \tilde U (x,T)  \eta (x,T)  - \tilde U(x,0)\eta(x,0) } \, dx\\
	=\al 0,}
since $\eta(x,T)=\eta(x,0)=0$ and $\eta(R,t)=\eta(-R,t)=0$. This is the claim in \eqref{bla2}, and we conclude the proof of the Lemma.
\end{proof}
\subsection{The Harnack inequality for the Marchaud derivative} 

We show here that the Harnack inequality for the Marchaud derivative can be deduced from the Harnack inequality applied to the extension operator.
\begin{proof}[Proof of Theorem \ref{teoHarn}]
We consider $U$ to be the extension of $\phi$, as introduced in Theorem \ref{teo:mainstat}. Since $\phi$ is nonnegative, given the explicit solution $U$ in \eqref{solsts1}, the function $U$ is also positive. Now, we reflect $U$ and obtain $\tilde U$, as we have done in Lemma \ref{tildeu}. 

We take at first $J=(0,T)$. Since $\D^s \phi (t)=0$ in $(0,T)$, we have by definition that
	\[ \lim_{x\to 0} x^{-2s} \pd[x]{ U(x,t)}=0,\]
	and thanks to Lemma \ref{tildeu}, we obtain that $\tilde U$ is a weak solution of \eqref{weaksol} in, say, $(-R,R)\times(0,T)$ for a given $R>0$. Moreover, the function $|x|^{1-2s}$ satisfies the condition \eqref{2-weight}, and according to Proposition \ref{Chiarenza_Serapioni}, we have that
	\[ \sup_{\lr{t_0-\frac{3\rho}4 , t_0-\frac{\rho}{4}}\times-\lr{-\frac{\rho}2,\frac{\rho}2}} u\leq \gamma \inf_{\lr{t_0+\frac{3\rho}4 , t_0+1}\times-\lr{\frac{\rho}2,\frac{\rho}2}}u\] for any $\rho$ such that $0<\rho<R/2$ and $[t_0-\rho^2,t_0+\rho^2]\subset (0,T)$.
	It suffices now to slice the domain at $x=0$ to obtain that
		\[\sup_{\lr{t_0-\frac{3\rho^2}4 , t_0-\frac{\rho^2}{4}}} u(0,t) \leq \gamma \inf_{\lr{t_0+\frac{3\rho^2}4 , t_0+\rho^2}}u (0,t) ,\]
		hence
		\[ \sup_{\lr{t_0-\frac{3\rho^2}4 , t_0-\frac{\rho^2}{4}}}\phi(t)\leq \gamma \inf_{\lr{t_0+\frac{3\rho^2}4 , t_0+\rho^2} }\phi(t) \]
		for any $\rho$ such that $0<\rho<R/2$ and $[t_0-\rho^2,t_0+\rho^2]\subset (0,T)$.
		
		Now, in order to prove that the Harnack inequality holds on any interval $J \subset \R$, one considers a translation of $U$, namely for any $\theta\in \R$, the function $U_\theta (x,t):=U(x,t+\theta)$, and reflects it as Lemma \ref{tildeu}. Then $\tilde U_\theta$ is a weak solution of \eqref{weaksol}, and $\tilde U_\theta(0,t)=\phi(t+\theta)$.
		One obtains then, as a consequence of the Harnack inequality for the solution $U_\theta$, the following
\[ \sup_{\lr{t_0-\frac{3\rho^2}4 , t_0-\frac{\rho^2}{4}}}\phi(t+\theta)\leq \gamma \inf_{\lr{t_0+\frac{3\rho^2}4 , t_0+\rho^2} }\phi(t+\theta) \]
for any $\rho$ such that $0<\rho<R/2$ and $[t_0-\rho^2,t_0+\rho^2]\subset (0,T)$. Therefore
	\[ \sup_{\lr{t_0-\theta-\frac{3\rho^2}4 , t_0-\theta -\frac{\rho^2}{4}}}\phi(t)\leq \gamma \inf_{\lr{t_0-\theta+\frac{3\rho^2}4 , t_0-\theta+\rho^2} }\phi(t) \]
for any $\rho$ such that $0<\rho<R/2$ such that $[t_0-\theta-\rho^2,t_0-\theta+\rho^2]\subset (0,T)$. As $\theta$ is arbitrary, one concludes that
	\[ \sup_{\lr{t_0-\frac{3\rho^2}4 , t_0 -\frac{\rho^2}{4}}}\phi(t)\leq \gamma \inf_{\lr{t_0+\frac{3\rho^2}4 , t_0+\rho^2} }\phi(t) \]
	for any $\rho$ such that $0<\rho<R/2$ and $[t_0-\rho^2, t_0+\rho^2]\subset J$. 
		This concludes the proof of the Theorem.
\end{proof}
\begin{oss}\label{Holder_regularity_ine}

We would like to point out that the Harnack type inequality obtained in Theorem \ref{teoHarn} 
 can be equivalently stated as follows.
 Let us define for every $\delta>0$ and for every $\tau\in \mathbb{R}$ the sets
\bgs{ I(\tau,\delta)=& [\tau-\frac{15}{8}\delta,\tau+\frac{1}{8}\delta],\\
I^+(\tau,\delta)= &[\tau-\frac{15}{8}\delta,\tau-\frac{7}{4}\delta]\\
I^-(\tau,\delta)=& [\tau-\frac{1}{8}\delta,\tau+\frac{1}{8}\delta].}

With this notation, the Harnack inequality affirms that for every  $I(\tau,\delta)\subset J$
$$
\sup_{I^+(\tau,\delta)} \varphi\leq \gamma \inf_{I^-(\tau,\delta)}\varphi.
$$
\end{oss}

\section{Backward equation}\label{rightderiv}
We now consider the case of the right Marchaud fractional derivative, denoted by ${\D}^s_-\phi.$ The following result is true:
\begin{teo} \label{teo:mainstat_back}
Let $s\in (0,1) $ and $\varphi \colon \R \to \R$ be a bounded, locally $C^{\bar{\gamma}}$ function for $s<\bar{\gamma}\leq1$. Let $U_-\colon [0,\infty)\times \R\to \R$ be 
a solution of the problem 
\syslab{ \label{prob2}
	& -\pd[t]{ U(x,t)} = \frac{1-2s}x \pd[x]{ U(x,t)} + \dpd[x]{ U(x,t)},  & &   (x,t)\in(0,\infty)\times \R\\
	 &U(0,t)=\varphi(t),  &&t\in\R\\
	  & \lim_{x \to \infty} U(x,t)=0. && 
}

Then $U_-$ defines the extension operator for $\phi$, such that
\eqlab{\label{mainstat2} \D^s_- \varphi(t)=-\lim_{x\to 0^+} c_s x^{-2s}(U_-(x,t)- \varphi(t)) ,}
where \[ c_s= 4^s\Gamma(s).\] \end{teo}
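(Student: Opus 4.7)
The plan is to deduce Theorem \ref{teo:mainstat_back} from Theorem \ref{teo:mainstat} by exploiting the time-reversal symmetry between the forward and the backward equations. The observation is that if $V(x,t)$ solves the forward weighted parabolic equation in \eqref{prob1}, then $W(x,t):=V(x,-t)$ solves the backward one in \eqref{prob2}, since $\partial_x$ and $\partial_{xx}$ are unaffected while $\partial_t W(x,t)=-\partial_t V(x,-t)$.

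Concretely, I would proceed as follows. First, set $\tilde\varphi(t):=\varphi(-t)$; note that $\tilde\varphi$ still satisfies the hypotheses of Theorem \ref{teo:mainstat}, namely boundedness and local $C^{\bar\gamma}$ regularity with $s<\bar\gamma\le 1$. Let $\tilde U$ be the continuous solution of problem \eqref{prob1} with boundary datum $\tilde\varphi$, produced by Theorem \ref{teo:sol}. Define
\[ U_-(x,t):=\tilde U(x,-t) = \frac{1}{4^s\Gamma(s)} x^{2s}\int_0^\infty e^{-\frac{x^2}{4\tau}}\tau^{-s-1}\varphi(t+\tau)\,d\tau,\]
where the last equality follows from the explicit formula \eqref{solsts2} after a change of variable. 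A direct check shows that $U_-$ is continuous on $[0,\infty)\times \R$, vanishes as $x\to\infty$ (by the same dominated convergence argument as in the proof of Theorem \ref{teo:sol}), satisfies $U_-(0,t)=\tilde\varphi(-t)=\varphi(t)$, and solves the backward equation in \eqref{prob2}.

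Next, I would transfer the trace identity from Theorem \ref{teo:mainstat} to the backward setting. Applying \eqref{mainstat} to $\tilde U$ at the point $-t$ yields
\[ \D^s\tilde\varphi(-t)=-c_s\lim_{x\to 0^+}x^{-2s}\bigl(\tilde U(x,-t)-\tilde\varphi(-t)\bigr)=-c_s\lim_{x\to 0^+}x^{-2s}\bigl(U_-(x,t)-\varphi(t)\bigr).\]
On the other hand, by definition \eqref{frader} and the change of variable $\tau\mapsto\tau$,
\[ \D^s\tilde\varphi(-t)=\int_0^\infty\frac{\tilde\varphi(-t)-\tilde\varphi(-t-\tau)}{\tau^{s+1}}\,d\tau=\int_0^\infty\frac{\varphi(t)-\varphi(t+\tau)}{\tau^{s+1}}\,d\tau,\]
which, up to the constant $s/\Gamma(1-s)$ that was absorbed earlier into $\D^s$, is exactly $\D^s_-\varphi(t)$ as given in \eqref{mdefct}. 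Combining the two displays gives \eqref{mainstat2}.

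There is no real obstacle here: the entire argument is a bookkeeping exercise in time reversal, and all the analytic work (existence of the extension, pointwise limit with dominated convergence, identification of the trace with the Marchaud derivative) has already been carried out in Sections 2 and 3. The only mild point to verify is that the dominated-convergence bounds used to show $\lim_{x\to 0^+}U_-(x,t)=\varphi(t)$ and to pass to the limit in the trace formula rely on the same local $C^{\bar\gamma}$ estimate on $\varphi$, which is preserved under the reflection $t\mapsto -t$. Alternatively, one could give a fully self-contained proof by repeating verbatim the proofs of Theorem \ref{teo:sol} and Theorem \ref{teo:mainstat} with the kernel $\Psi_s(x,-\tau)$ replacing $\Psi_s(x,\tau)$, but the symmetry reduction above is shorter and avoids duplication.
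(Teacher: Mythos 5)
Your proposal is correct and follows essentially the same route as the paper: the paper also observes that $U_-(x,t)=U(x,-t)$ reduces the backward problem to the forward one, writes down the explicit kernel formula with $\varphi(t+\tau)$ (its equation \eqref{solsts2_back}, identical to yours), and identifies the trace limit with $\D^s_-\varphi$. The only cosmetic difference is that you transfer the trace identity by applying Theorem \ref{teo:mainstat} to the reflected datum $\tilde\varphi(t)=\varphi(-t)$ at the point $-t$ --- a slightly tidier formalization of the symmetry --- whereas the paper simply repeats the dominated-convergence limit computation from \eqref{derivative_right} on the new formula; both arguments are sound and rest on the same analytic core.
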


We do not repeat all the computations, that are very similar to the case of the left Marchaud-derivative ${\D}^s\equiv {\D}^s_+.$

We only point out that if $U_-$ is a solution of \eqref{prob2}, then $U_-(x,t)=U(x,-t),$ where $U$ is the solution of the differential equation in \eqref{prob1}.
 Thanks to Theorem \ref{teo:sol} and keeping in mind (\ref{solsts2}), we get

\eqlab{\label{solsts2_back} U_-(x,t)=\frac{1}{4^s \Gamma(s)} x^{2s} \int_0^{\infty} e^{-\frac{x^2}{4\tau}} \tau^{-s-1}  \varphi(t+\tau) \, d\tau.}
Recalling the computations in (\ref{derivative_right}) and the properties of the kernel $\Psi_s$, see formula (\ref{kencalc1}),  we obtain that

\begin{equation}
\begin{split}
\lim_{x \to 0^+} x^{-2s} \left(U_-(x,t)-\varphi(t)\right) &= \lim_{x \to 0^+} \frac{x^{-2s}}{4^s\Gamma(s)} \int_0^\infty x^{2s}e^{-\frac{x^2}{4\tau}} \tau^{-s-1} \lr{ \varphi(t+\tau) - \varphi (t)} \, d\tau\\
	&= \lim_{x\to 0^+} \frac{1}{4^s \Gamma(s)} \int_0^\infty e^{-\frac{x^2}{4\tau}}  \frac{ \varphi(t+\tau)-\varphi(t)}{\tau^{s+1}}.
\end{split}
\end{equation}	
Thus, using the same argument as in the proof of Theorem \ref{teo:mainstat}, we conclude that
\begin{equation}
\begin{split}
\lim_{x \to 0^+} x^{-2s} \left(U_-(x,t)-\varphi(t)\right) &= \frac{1}{4^s \Gamma(s)} \int_0^\infty \frac{ \varphi(t+\tau)-\varphi(t)}{\tau^{s+1}},
\end{split}
\end{equation}	
that is 
$$
{\D}_-^s\varphi(t)=-c_s\lim_{x \to 0^+} x^{-2s} \left(U_-(x,t)-\varphi(t)\right). 
$$
It is worth to say that ${\D}_-^{1-s}{\D}_-^s\varphi(t)=-\displaystyle \frac{d\varphi}{dt}.$ Hence, using a different notation we can write that
$$
{\D}_+^s\varphi(t)=\lr{\frac{d}{dt}}^s\varphi,\quad {\D}_-^s\varphi(t)=\lr{-\frac{d}{dt}}^s\varphi.
$$

\section{Appendix}\label{intfrd} 

In the Appendix, we provide some details on the Marchaud derivative.

  First of all, as stated in the Introduction, the Marchaud fractional operator $\D^s \varphi$ is well defined for $\phi$ bounded, locally $C^{\bar{\gamma}}$ for $\bar{\gamma}>s$. Indeed, we have that:
	\begin{equation*}
	\begin{split}
	\int_0^\infty \frac{\varphi(t)-\varphi(t-\tau)}{\tau^{s+1}}\, d\tau &=   \int_{1}^\infty \frac{\varphi(t)-\varphi(t-\tau)}{\tau^{s+1}}\, d\tau + \int_{0}^1  \frac{\varphi(t)-\varphi(t-\tau)}{\tau^{s+1}}\, d\tau . \\
	&=  I_1+I_2.
	\end{split}
	\end{equation*}
	Since $\phi$ is bounded, we have 
	\begin{equation}
	\begin{split}
	 I_1 \leq 2\|\varphi\|_{L^{\infty}(\R)} \int_1^\infty  \frac{1}{\tau^{s+1}}\, d\tau 
				=   C_{s,\varphi}.
				\end{split}
				\end{equation}
				Moreover, $\varphi$ is locally H{ö}lder, hence in $(0,1)$ we may write
					\[|\varphi (t)- \varphi(t- \tau)| \leq c {\tau^{\bar{\gamma}}}.\]
					Therefore
	\bgs{ I_2\leq  c  \int_0^1  \tau^{\bar{\gamma}-s-1} \, d\tau
			\leq   C_{s,\bar{\gamma}},}
			recalling that $\bar{\gamma}>s$.

There are, in literature, many other definitions of fractional derivatives. The interested reader can consult, for instance, \cite{MillerRoss} or \cite{samkokilbas}  for further details. Here, we recall only the Riemann-Liouville fractional derivative, defined as

$$
\mathcal D^{s}_{\pm} f(t)=\frac{\pm 1}{\Gamma(1-s)}\frac{d}{dt}\int_{0}^{\infty}\frac{f(t\mp\tau)}{\tau^{s}}d\tau
$$
for $s\in \mathbb{C},$ $0<\Re s<1,$ see \cite{Samko}, Definition 1.16 and the Caputo derivative (see formulas 2.4.17 and 2.4.18 in \cite{KST06}), given by
\bgs{	&{D}^s_{\pm} f(t):= \displaystyle \frac{\pm 1}{\Gamma(1-s)}\int_0^\infty \frac   {f'(t\mp \tau)}{\tau^s}\, d\tau .}
The definitions of Caputo or Riemann-Liouville are related to the Marchaud definition. Indeed, as one can see in formula (13.2) in the monograph  \cite{samkokilbas}, the Marchaud derivative is an extension of Riemann-Liouville's, with weaker conditions on the function $f$. For sufficiently smooth $f$ (say absolutely continuous, for instance), integrating by parts in the Riemann-Liouville definition, one can deduce the Marchaud notion  (see also Theorem 1.17 in \cite{Samko}).
 
As a further remark, the Marchaud derivative coincides with the notion of fractional derivative given by  Gr\"unwald and Letnikov, see \cite{Grunwald}, and Theorem 20.4 in \cite{samkokilbas} for the proof.

We  like also to remark that, 
just adapting the constant $c_s$ given in Theorem \ref{teo:mainstat} by fixing $c_s=\frac{4^s\Gamma(s)s}{\Gamma(1-s)},$ in (\ref{mainstat1}) we straightforwardly obtain the definition (\ref{mdefct}). The advantage of this choice is that $\D_{\pm}^s\varphi\to\varphi$ as $s\to 0^+$ and $D_{\pm}^s\varphi\to \varphi'$ as $s\to 1^-.$ Indeed, it is well known that the Marchaud derivative is not defined for $s=0$ and $s=1$ because in those cases the integral term in (\ref{mdefct}) (as in (\ref{frader})) does not converge. However, one is able to pass to the limit by using the constant term from definition \eqref{mdefct}, that in this sense plays a fundamental role.

\bibliography{CF_ext_final}
\bibliographystyle{plain}

\end{document}